\documentclass[reqno,a4paper,12pt]{amsart}

\usepackage{amssymb}

\numberwithin{equation}{section}
\date{\today}


\newcommand\x{\mathbf{x}}
\newcommand\y{\mathbf{y}}
\newcommand\ba{\mathbf{\alpha}}
\newcommand\p{\mathbf{p}}
\newcommand\z{\mathbf{z}}

\newcommand{\R}{\mathbb{R}}

\newcommand{\D}{\mathbb{D}}

\newcommand{\Con}{C_0^\infty}

\newcommand{\grad}{\nabla}
\newcommand{\bgrad}{\mathbf{\nabla}}

\newcommand{\B}{{\mathbf{B}}}

\newtheorem{Theorem}{Theorem}
\newtheorem{Corollary}{Corollary}

\begin{document}

\title[Dirac Sobolev inequalities and zero modes]
{Dirac-Sobolev inequalities and estimates for the zero modes of
massless Dirac operators}
\author[A.~Balinsky]{A. Balinsky}
\address{School of Mathematics\\
Cardiff University\\
         23 Senghennydd Road\\
         Cardiff CF2 4YH\\
         UK}
\email{BalinskyA@cardiff.ac.uk}
\author[W.~D.~Evans]{W.~D. Evans}
\address{School of Mathematics\\
Cardiff University\\
         23 Senghennydd Road\\
         Cardiff CF2 4YH\\
         UK}
\email{EvansWD@cardiff.ac.uk}

\author[Y.~Saito]{Y.~Sait\={o}}
\address{Department of Mathematics\\
         University of Alabama at Birmingham\\
         Birmingham, AL 35294-1170\\
         USA}
\email{saito@math.uab.edu}
\thanks { The authors gratefully acknowledge the support of
the EPSRC under grant EP/E04834X/1}
\begin{abstract}
The paper analyses the decay of any zero modes that might exist
for a massless Dirac operator $H:= \ba \cdot (1/i) \bgrad + Q, $
where $Q$ is $4 \times 4$-matrix-valued and of order
$O(|\x|^{-1})$ at infinity. The approach is based on inversion
with respect to the unit sphere in $\R^3$ and establishing
embedding theorems for Dirac-Sobolev spaces of spinors $f$ which
are such that $f$ and $Hf$ lie in $\left(L^p(\R^3)\right)^4, 1\le
p<\infty.$
\end{abstract}
\maketitle

\section{Introduction}

The mathematical interpretation of the stability of matter problem
concerns the question of whether the energy of a system of
particles is bounded from below (stability of the first kind) and
by a constant multiple of the number of particles (stability of
the second kind). Dyson and Lenard made the initial breakthrough
in 1967 for a non-relativistic model, and since then the problem
has attracted a lot of attention, various relativistic models
having been intensively studied in recent years. As was
demonstrated by Fr\"{o}hlich, Lieb and Loss in \cite{FLL}, to
establish stability, it is of crucial importance to know if the
kinetic energy operator has zero modes, i.e. eigenvectors
corresponding to an eigenvalue at 0; the possibility that zero
modes can exist was established at about the same time by Loss and
Yau in \cite{LY}. Subsequently, Balinsky and Evans showed in
\cite{BA} that zero modes are rare for these problems.

The first objective of the research reported here was to confirm
that the results in \cite{BA} can be extended to Dirac-type
operators with matrix-valued potentials. This then set the scene
for the main goal which was to determine the decay rates of zero
modes whenever they occur. The Dirac operator considered is of the
form,
\begin{equation}\label{Dir}
    H= \ba \cdot \p+ Q, \ \ \ \p = -i \bgrad
\end{equation}
where $\ba$ is the triple of Dirac matrices and $Q$ is a $4 \times
4$ matrix-valued function. Assuming that
$\|Q(\cdot)\|_{\mathbb{C}^4} \in L^3(\R^3),$ where
$\|\cdot\|_{\mathbb{C}^4}$ denotes any matrix norm on
$\mathbb{C}^4,$ it was shown that $Q$ is a small perturbation of
$\ba \cdot \p$ and hence (\ref{Dir}) defines a self-adjoint
operator $H$ as an operator sum with domain
$\left(H^{1,2}(\R^3)\right)^4,$ the space of $4$-component spinors
in $\left(L^2(\R^3)\right)^4$ with weak first derivatives in
$\left(L^2(\R^3)\right)^4$. The technique in \cite{BA} readily
applied to (\ref{Dir}) to meet the first objective and yield the
following result:

\begin{Theorem} Let $\|Q(\cdot)\|_{\mathbb{C}^4} \in L^3(\R^3).$ Then $H_t :=\ba \cdot \p+
tQ, t\in R^+$ can have a zero mode for only a countable set of
values of $t.$ Moreover
\[
\rm{nul} (H) \le const. \int_{\R^3} |Q(\x)|^3 d\x,
\]
where $\rm{nul}(H)$ denotes the nullity of $H$, i.e. the dimension
of the kernel of $H.$
\end{Theorem}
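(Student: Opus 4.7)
The plan is to convert the zero-mode equation $H_t f = 0$ into an eigenvalue problem for a compact Birman--Schwinger operator and then apply a Cwikel--Lieb--Rozenblum estimate to control both the set of critical $t$'s and the multiplicity of the critical eigenvalue. After the polar decomposition $Q = U|Q|$, the identity $(\ba\cdot\p)^2 = -\Delta\, I_4$ shows that $\ba\cdot\p$ is a Fourier multiplier by the matrix symbol $\ba\cdot\bsig$, invertible off the origin with $(\ba\cdot\p)^{-1} = (\ba\cdot\p)(-\Delta)^{-1}$ of pointwise operator-norm size $|\bsig|^{-1}$. For $f \in (H^{1,2}(\R^3))^4$ a zero mode, $(\ba\cdot\p)f = -tU|Q|f$; H\"older and Hardy--Littlewood--Sobolev estimates ensure $Qf \in L^{6/5}$, so inversion gives $f = -t(\ba\cdot\p)^{-1}U|Q|f$. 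Setting $g := |Q|^{1/2}f \in L^2$, one obtains
\[
g + tKg = 0, \qquad K := |Q|^{1/2}(\ba\cdot\p)^{-1}U|Q|^{1/2},
\]
so $\ker H_t$ is isomorphic to the $(-1/t)$-eigenspace of $K$ on $(L^2(\R^3))^4$.

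The central step is to prove the Schatten estimate $\|K\|_{\mathcal S_3}^3 \le C \int_{\R^3}|Q(\x)|^3\,d\x$. I would factor $K$ as a product of two operators of the form $|Q|^{1/2}\varphi(\p)$ with $\varphi(\bsig)$ of size $|\bsig|^{-1/2}$ (up to matrix factors bounded in operator norm), and then invoke Cwikel's inequality with exponent $p = 6$: since $|Q|^{1/2} \in L^6(\R^3)$ (by hypothesis) and $|\bsig|^{-1/2} \in L^{6,w}(\R^3)$, each factor lies in $\mathcal S_6$, and H\"older's inequality in the Schatten scale $\mathcal S_6 \cdot \mathcal S_6 \subset \mathcal S_3$ delivers the stated bound. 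This is the main technical obstacle, as $p = 3$ is precisely the endpoint of the Cwikel scale in three dimensions and the matrix-valued nature of the Dirac symbol demands careful operator-norm bookkeeping; the alternative route is to pass directly through the scalar CLR estimate for $|Q|^{1/2}(-\Delta)^{-1/2}$ acting on spinor components.

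Once $K \in \mathcal S_3$ is in hand, both conclusions follow cleanly. Compactness of $K$ confines its spectrum to a countable set accumulating only at $0$, so $-1/t \in \sigma(K)$ for at most countably many $t \in \R^+$, which is the first assertion. For the nullity bound, taking $t = 1$, the dimension of $\ker(K + I)$ is no greater than the number of eigenvalues of $K$ (with multiplicity) of modulus at least $1$; by Weyl's inequality this count is bounded by $\sum_j s_j(K)^3 = \|K\|_{\mathcal S_3}^3 \le C \int_{\R^3}|Q(\x)|^3\,d\x$, yielding the stated estimate. The step I would check most carefully is the correspondence between zero modes of $H_t$ and eigenvectors of $K$, specifically the implication from $g$ back to a genuine $H^{1,2}$ spinor $f$, which requires verifying that $f := -t(\ba\cdot\p)^{-1}U|Q|^{1/2}g$ lies in $(H^{1,2}(\R^3))^4$ whenever $g \in (L^2(\R^3))^4$ solves the Birman--Schwinger equation.
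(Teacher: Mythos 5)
Your overall strategy is the right one, and it is in fact the strategy the paper itself intends: Theorem 1 is not proved in the text but is attributed to the Birman--Schwinger technique of \cite{BA} (reduction of $H_tf=0$ to the eigenvalue $-1/t$ of $K=|Q|^{1/2}(\ba\cdot\p)^{-1}U|Q|^{1/2}$, compactness, and a Cwikel-type singular-value bound). However, your central technical claim is false as stated, and the nullity argument built on it collapses. Cwikel's inequality with exponent $6$ does \emph{not} put the factors $|Q|^{1/2}\varphi(\p)$ into $\mathcal{S}_6$: the symbol $|\xi|^{-1/2}$ lies in $L^{6,w}(\R^3)$ but not in $L^6(\R^3)$ (the integral of $|\xi|^{-3}$ diverges at both $0$ and $\infty$), and Cwikel's theorem then yields only the weak Schatten class $\mathcal{S}_{6,w}$. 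Correspondingly $K$ lies only in $\mathcal{S}_{3,w}$; indeed, for a smooth compactly supported $Q$ the Birman--Solomyak asymptotics give $s_n(K)\asymp n^{-1/3}$, so $\sum_j s_j(K)^3=\infty$ and the inequality $\|K\|_{\mathcal{S}_3}^3\le C\int|Q|^3\,d\x$ cannot hold. Hence the step ``by Weyl's inequality the number of eigenvalues of modulus $\ge 1$ is bounded by $\sum_j s_j(K)^3$'' is vacuous: the majorant is infinite. You flagged the endpoint nature of $p=3$ yourself, but the proposal does not actually resolve it.

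The gap is repairable, and the repair is exactly what the cited technique uses: prove the \emph{weak} estimate $\|K\|_{\mathcal{S}_{3,w}}^3\le C\int_{\R^3}|Q(\x)|^3\,d\x$ (Cwikel for each factor in $\mathcal{S}_{6,w}$, then H\"older in the weak Schatten scale), and replace the $\mathcal{S}_3$-sum by a counting argument compatible with the weak quasi-norm. For instance, if $\lambda_1,\dots,\lambda_N$ are the eigenvalues of $K$ with $|\lambda_j|\ge 1$, Weyl's product inequality gives $1\le\prod_{j\le N}s_j(K)\le M^N(N!)^{-1/3}$ with $M=\|K\|_{\mathcal{S}_{3,w}}$, and Stirling's bound yields $N\le e\,M^3$, which is the stated bound $\mathrm{nul}(H)\le \mathrm{const.}\int|Q|^3\,d\x$ (with a specific constant). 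The rest of your argument is sound: compactness of $K$ gives the countability of the exceptional set of $t$, the injectivity of $f\mapsto|Q|^{1/2}f$ on $\ker H_t$ follows because $(\ba\cdot\p)f=0$ with $f\in\left(H^{1,2}(\R^3)\right)^4$ forces $f=0$, and the passage back from a Birman--Schwinger eigenvector $g$ to a genuine $H^{1,2}$ zero mode is the routine verification you already identified.
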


It is proved in \cite{SU} (see also \cite{SU2}) that if
$\|Q(\x)\|_{\mathbb{C}^4} = O(<\x>^{-\rho}), \rho >1,$ where
$<\x>:= \sqrt{1+|\x|^2},$ then any zero mode is $O(|\x|^{-2})$ at
$\infty $ and there are no resonances (defined as a solution
$\psi$ say which is such that $<\x>^{-s} \psi \in
\left(L^2(\R^3)\right)^4$ for some $s>0$) if $ \rho
> 3/2:$ note that such a $Q$ satisfies the assumption of Theorem 1.
In view of this result we concentrate our attention on more
singular potentials $Q,$ namely ones which satisfy
\begin{equation}\label{As}
    \|Q(\x)\|_{\mathbb{C}^4} = O(|\x|^{-1}), \ \ \ \x \in B_1^c,
\end{equation}
where $B_1$ is the unit ball, centre the origin in $\R^3,$ and
$B_1^c$ denotes its complement. We actually consider weak
solutions of $H \psi =0$ in $\left(L^2(B_1^c)\right)^4.$ Hence the
behaviour of $Q$ at the origin, and indeed within $B_1,$ does not
feature. Our main result is given in Theorem 4 below. Our approach
is very different to that in \cite{SU} and is based on two
techniques. In the first we use inversion with respect to $B_1$ to
replace the problem in $B_1^c$ by an analogous one in $B_1.$ The
second part involves establishing Sobolev-type embedding theorems
for the spaces $\mathbb{H}^{1,p}(\Omega)$ defined as the
completion of $\left[C_0^{\infty}(\Omega)\right]^4$ with respect
to the norm
\begin{equation}\label{2.1}
    \|f\|_{1,p;\Omega} := \left \{ \int_{\Omega} (|(\ba \cdot \p)f|^p +
    |f|^p)  d\x \right \}^{1/p}.
\end{equation}
For $p=2$ these are just vector versions of the standard Sobolev
spaces, but we need cases $ p \neq 2 $ for which we were unable to
find any appropriate results in the literature. The results we
give in section 3 use a method of Ledoux in \cite{L} to derive a
weak inequality and then embedding properties of Lorentz spaces on
bounded domains.

\section{Reduction by inversion}

We recall that in (\ref{Dir}), $\bf{\alpha} =
(\alpha_1,\alpha_2,\alpha_3)$ are Hermitian $4\times4$ matrices
satisfying
\[
\alpha_j\alpha_k + \alpha_k\alpha_j =2 \delta_{jk} I_4,\ \ \ j,k =
1,2,3,
\]
where $I_4$ is the unit $ 4 \times 4$ matrix, and we choose
\begin{equation}\label{1.1a}
\alpha_j = \left(\begin{array}{ll} 0_2 & \sigma_j \\
\sigma_j & 0_2
\end{array} \right),\ \ \ j=1,2,3,
\end{equation} where the $\sigma_j$ are the Pauli matrices
\begin{equation}\label{1.1b}
   \sigma_1 = \left(\begin{array}{ll} 0 & 1 \\
1 & 0
\end{array} \right),\ \ \ \sigma_2 = \left(\begin{array}{ll} 0 & -i \\
i & 0 \end{array}\right),\ \ \ \sigma_3 = \left(\begin{array}{ll} 1 & 0 \\
0 & -1 \end{array}\right)
\end{equation}
and $0_2$ is the $2 \times 2$ zero matrix. From (\ref{As}) we have
that the $4 \times 4$ matrix-valued function $Q$ has components
$q_{jk}, j,k =1,\cdots,4$ which satisfy
\[
|q_{jk}(\x)| \le C |\x|^{-1},\ \ \ |\x| \ge 1.
\]

For $ \mathbb{C}^4$-valued functions $f, g,$ measurable on an open
set $ \Omega \subseteq \R^3 $, and $z,w \in \mathbb{C}^4,$ we
shall use the following notation:
\begin{eqnarray*}
  <z,w> &:=& \sum _{j=1}^4z_j \overline{w_j},\ \ \ |z| := <z,z>^{\frac12} \\
  (f,g)_{\Omega} &:=& \int_{\Omega} <f(\x), g(\x)> d\x \\
  \|f\|_{p,\Omega} &:=& \left(\int_{\Omega} |f(x)|^p d\x \right)^{1/p}.
\end{eqnarray*}
Thus $ (f,g)_{\Omega}$ is the standard inner-product on
$\left(L^2(\Omega)\right)^4 $ and $\|f\|_{\Omega} =
\|f\|_{2,\Omega} $ the standard norm: when $\Omega = \R^3$ we
shall simply write $(f,g)$ and $\|f\|$.

Let $\psi$ be a weak solution of $ H \psi =0$ in $ \R^3 \setminus
\overline{B}_1$, where $B_1$ is the open unit ball centre the
origin and $ \overline{B}_1$ is its closure: hence for all $\phi
\in \left(\Con(\R^3 \setminus \overline{B}_1)\right)^4$
\begin{equation}\label{weak1}
    I:= \int <H \psi(\x),\phi(\x)> d\x =0.
\end{equation}
Inversion with respect to $B_1$ is the involution $ Inv: \x
\mapsto \y, \y= \x/ |\x|^2,$ and for any function defined on $\R^3
\setminus B_1$, the map $M : \phi \mapsto \tilde{\phi}:= \phi
\circ Inv^{-1}$ is such that $ \tilde{\phi}(\y) = \phi(\x) $ and
yields a function on $B_1$. Hence $\phi \in \left(\Con(\R^3
\setminus \overline{B}_1)\right)^4 $ means that $\tilde{\phi} \in
\left(\Con(B_1\setminus \{0\})\right)^4.$ The inversion gives
\begin{equation}\label{inversion}
 M\{ (\ba \cdot \p )\psi\}(\y) = |\y|^2 ({\bf{\beta}}\cdot \p)
 \tilde{\psi}(\y),
\end{equation}
where $ {\bf{\beta}} = (\beta_1,\beta_2,\beta_3)$ and
\[
\beta_k(\y) = \sum_{j=1}^3 \ba_j\left( \delta^k_j -
\frac{2y_ky_j}{|\y|^2}\right),
\]
where $\delta_j^k$ is the Kronecker delta function. It is readily
verified that the matrices $\beta_k(\y)$ are Hermitian and satisfy
\[
\beta_k(\y)\beta_j(\y) + \beta_j(\y) \beta_k(\y) = 2\delta_j^k
I_4.
\]
Also there exists a unitary matrix $ X(\y)$ such that $ X \in
C^{\infty}(\R^3\setminus \{0\})$ and for all $\y \neq 0,$
\begin{equation}\label{diagonal}
    X(\y)^{-1} \beta_k(\y) X(\y) = -\ba_k, \ \ \ k=1,2,3.
\end{equation}
Setting $\omega := \y/|\y|$ it is easy to verify that these
conditions are satisfied by
\begin{equation}\label{MatrixX}
   X(\y)= \left(\begin{array}{ll}X_2(\y) & O_2 \\
O_2 & X_2(\y) \\
\end{array} \right),
\end{equation}
where
\begin{equation*}
    X(\y) = \left( \begin{array}{ll} i\omega_3 & \omega_2
    +i\omega_1 \\
    -\omega_2+i \omega_1 & -i \omega_3 \\
    \end{array} \right).
\end{equation*}
Let $ \tilde{\psi}(\y) = -X(\y) \Psi(\y).$ Then from
(\ref{inversion}) we have
\begin{equation}\label{transsp}
M\{ (\ba \cdot \p )\psi\}(\y) = |\y|^2 X(\y)\left\{
(\ba\cdot\p)\Psi(\y) + Y(\y) \Psi(\y)\right \},
\end{equation}
where
\begin{equation}\label{Y}
    Y(\y) = \sum_{k=1}^3
    \alpha_k X(y)^{-1}\left(-i\frac{\partial}{\partial_{y_k}}X(\y)\right).
\end{equation}
Also, a calculation gives that the Jacobian of the inversion gives
$d\x = |\y|^{-6} d\y.$

Returning now to (\ref{weak1}), and with $\tilde{\phi}(\y) =
-X(\y) \Phi(\y)$ , the inversion yields
\begin{eqnarray*}
    I&=& \int_{B_1}< |\y|^2 X(\y)\left\{
(\ba\cdot\p)\Psi(\y) + Y(\y) \Psi(\y)\right. \\
& -& \left. \tilde{Q}(\y)X(\y)\Psi(\y)\right \}, X(\y)\Phi(\y)>
|\y|^{-6} d\y =0
\end{eqnarray*}
for all $\Phi \in \left(\Con(B_1\setminus \{0\})\right)^4$, which
can be written as
\begin{equation}\label{invweak1}
    I = \int_{B_1}<
(\ba\cdot\p)\Psi(\y) + Z(\y) \Psi(\y), |\y|^{-4}\Phi(\y)>  d\y =0
\end{equation}
where
\begin{equation}\label{Z}
    Z(\y) = Y(\y) - |\y|^{-2} X(\y)^{-1}\tilde{Q}(\y) X(\y).
\end{equation}
Equivalently, we can remove the factor $|\y|^{-4}$ in
(\ref{invweak1}) to give
\begin{equation}\label{invweak2}
    I = \int_{B_1}<
(\ba\cdot\p)\Psi(\y) + Z(\y) \Psi(\y), \Phi(\y)>  d\y =0
\end{equation}
for all $ \Phi \in \left(\Con(B_1\setminus \{0\})\right)^4.$

Let $\zeta \in C^{\infty}(\R^+)$ satisfy
\[
\zeta(t) = \left \{ \begin{array} {ll} 0 & {\rm{for}}\ \ 0<t<1 \\
1 & {\rm{for}} \ \ t>2
\end{array} \right.
\]
and for $\y \in \R^3 $ set $\zeta_n(\y)= \zeta(n|\y|).$ Then
\[
\grad \zeta_n(\y) =n \zeta'(n|\y|) \frac{\y}{|\y|}
\]
and so
\[
|\grad \zeta_n(\y)| = O\left(n
\chi_{[\frac1n,\frac2n]}(|\y|)\right)
\]
where $\chi_I$ denotes the characteristic function of the interval
$I.$ We then have from (\ref{invweak2}), now for all $\Phi \in
\left(C_0^{\infty}(B_1)\right)^4,$
\begin{eqnarray}\label{invweak3}
    I &=& \int_{B_1}<
(\ba\cdot\p)\Psi(\y) + Z(\y) \Psi(\y),
\zeta_n(\y)\Phi(\y)>  d\y \nonumber \\
&=& \int_{B_1}< \zeta_n(\y)\left\{(\ba\cdot\p)\Psi(\y) + Z(\y)
\Psi(\y)\right \}, \Phi(\y)> d\y \nonumber \\
&=& 0.
\end{eqnarray}
In $I$,
\[
\zeta_n(\y)(\ba\cdot\p)\Psi(\y) = (\ba\cdot\p)(\zeta_n \Psi)(\y)
-[(\ba\cdot\p)\zeta_n]( \Psi)(\y)
\]
and
\[
V_n(\y) :=[(\ba\cdot\p)\zeta_n] = O(n \chi_{[\frac1n,\frac2n]}).
\]
Therefore, as $ n \rightarrow \infty,$
\begin{eqnarray*}
  |\int_{B_1}<V_n(\y)\Phi(\y), \Phi(\y)> d\y | &\le& n \|\Psi\|_{L^{\infty}(B_1)}\int_{B_1} \chi_{[\frac1n,\frac2n]}d\y \\
   &=& O(n^{-2}) \rightarrow 0.
\end{eqnarray*}
Also
\begin{align*}
&\int_{B_1}< [(\ba\cdot\p)+Z(\y)] \zeta_n(\y)\Psi(\y),
\Phi(\y)> d\y \\
& = \int_{B_1}<  \zeta_n(\y)\Psi(\y),
[(\ba\cdot\p)+Z(\y)]\Phi(\y)> d\y \\
& \rightarrow \int_{B_1}<  \Psi(\y), [(\ba\cdot\p)+Z(\y)]\Phi(\y)>
d\y.
\end{align*}
We have therefore proved that
\begin{equation}\label{invweakfinal}
\int_{B_1}< [(\ba\cdot\p)+Z(\y)] \Psi(\y), \Phi(\y)> d\y =0
\end{equation}
for all $\Phi \in \left( \Con(B_1)\right)^4.$ In other words
\begin{equation}\label{invweakfinal2}
[(\ba\cdot\p)+Z(\y)] \Psi(\y) = 0
\end{equation}
in the weak sense. From (\ref{Z}) it follows that
\begin{equation}\label{boundonZ}
    \|Z(\y)\|_{\mathbb{C}^4} \le C |\y|^{-1}.
\end{equation}

\bigskip

\section{Dirac-Sobolev inequalities}

Let $ \mathbb{H}^{1,p}(\Omega), 1\le p <\infty, $ denote the
completion of $[\Con(\Omega)]^4$ with respect to the norm
\begin{equation}\label{2.1}
    \|f\|_{1,p;\Omega} := \left \{ \int_{\Omega} (|(\ba \cdot \p)f|^p +
    |f|^p)  d\x \right \}^{1/p}.
\end{equation}
We also use the notation
\begin{equation}\label{DandP}
\D := (\ba \cdot \p)^2, \ \ \  \mathbb{P}_t : e^{- t\D},\ \ t \ge
0.
\end{equation}
Then,
\[
(\mathbb{D}f)_j = -\Delta f_j,\ \ \ (\mathbb{P}_t f)_j = e^{-t
\Delta}f_j,\ \ j=1,2,3,4,
\]
where $\{e^{-t \Delta}\}_{t\ge 0}$ is the heat semigroup.
Furthermore, for all $t>0, \x \in \Omega,$
\begin{equation}\label{2.2}
    \left(e^{-t\Delta} f_j\right)(\x) = \frac{1}{(4\pi t)^{3/2}} \int_{\R^3}
    f_j(\y) e^{-|\x-\y|^2/4t} d\y,
\end{equation}
and so
\begin{equation}\label{2.2A}
\left(\mathbb{P}_t f\right)(\x) = \frac{1}{(4\pi t)^{3/2}}
\int_{\R^3}
    f(\y) e^{-|\x-\y|^2/4t} d\y.
\end{equation}
Note that if $\Omega \neq \R^n $ we put any $f\in
\mathbb{H}^{1,p}(\Omega)$ to be zero outside $\Omega$ and hence is
in $\mathbb{H}^{1,p} \equiv \mathbb{H}^{1,p}(\R^n).$ Define
\begin{equation}\label{2.3}
    \|f\|_{B^{\alpha}(\Omega)} :=  \sup_{t>0} \left \{ t^{-\alpha/2}
    |\mathbb{P}_t f|_{\infty; \Omega} \right \},
\end{equation}
where $ |\mathbb{P}_t f|_{\infty; \Omega} := \sup_{\x \in \Omega}
| \mathbb{P}_t f(\x)|,$ and denote by $B^{\alpha}(\Omega)$ the
completion of $\left[\Con(\Omega)\right]^4$ with respect to $
\|\cdot\|_{B^{\alpha}(\Omega)}.$

Our main theorem in this section introduces the weak-$L^q$ space
on $\Omega$, written, $L^{q,\infty}(\Omega),$ which is defined by
\[
\|f\|_{q,\infty;\Omega} : \sup_{u>0} \left \{u^q \lambda ( |f| \ge
u )\right \} ,
\]
where $\lambda $ denotes Lebesgue measure and $\lambda ( |f| \ge u
)$ stands for the measure of the set in $\Omega$ on which
$|f(\x)|\ge u.$

\begin{Theorem}\label{embthm}
Let $1\le p< q <\infty$ and let $f $ be such that $\|(\ba \cdot
\p)f\|_{p,\Omega}< \infty $ and $ f \in
B^{\theta/(\theta-1)}(\Omega),$ for $ \theta = p/q.$ Then we have
for some constant $C >0,$
\begin{equation}\label{DSineq}
    \|f\|_{q,\infty; \Omega} \le C \|(\ba \cdot \p)f\|_{p,\Omega}^{\theta}
    \|f\|^{1-\theta}_{B^{\theta/(\theta-1)}(\Omega)}.
\end{equation}
\end{Theorem}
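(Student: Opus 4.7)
The plan is to adapt Ledoux's heat-semigroup splitting technique. Set $\alpha := \theta/(\theta-1) = p/(p-q)$; since $p<q$, $\alpha<0$, so the $B^{\alpha}(\Omega)$ norm controls $\mathbb{P}_t f$ in $L^\infty$ as $t\downarrow 0$. For fixed $u>0$ I split $f=\mathbb{P}_t f+(I-\mathbb{P}_t)f$; the definition of $\|f\|_{B^{\alpha}(\Omega)}$ gives $|\mathbb{P}_t f(\x)|\le t^{\alpha/2}\|f\|_{B^{\alpha}(\Omega)}$ on $\Omega$, so choosing $t=t(u)$ by $t^{\alpha/2}\|f\|_{B^{\alpha}(\Omega)}=u/2$ yields $\{|f|\ge u\}\subset\{|(I-\mathbb{P}_t)f|\ge u/2\}$, and Chebyshev in $L^p$ gives
\begin{equation*}
\lambda(|f|\ge u)\le (2/u)^{p}\,\|(I-\mathbb{P}_t)f\|_{p,\Omega}^{\,p}.
\end{equation*}

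The core analytic step is the bound $\|(I-\mathbb{P}_t)f\|_{p,\Omega}\le Ct^{1/2}\|(\ba\cdot\p)f\|_{p,\Omega}$. Since $\D=(\ba\cdot\p)^2$ and $(\ba\cdot\p)$ commutes with $\mathbb{P}_s$, the fundamental theorem of calculus gives
\begin{equation*}
(I-\mathbb{P}_t)f=\int_0^t \D\mathbb{P}_s f\,ds=\int_0^t (\ba\cdot\p)\mathbb{P}_s\bigl[(\ba\cdot\p)f\bigr]\,ds.
\end{equation*}
Componentwise $\mathbb{P}_s$ is convolution on $\R^3$ with the scalar heat kernel $p_s$, and a direct computation gives $\|\nabla p_s\|_{L^1(\R^3)}=Cs^{-1/2}$; Young's inequality then produces the operator bound $\|(\ba\cdot\p)\mathbb{P}_s g\|_p\le Cs^{-1/2}\|g\|_p$ uniformly for $1\le p<\infty$, and integrating in $s\in(0,t)$ yields the claimed $t^{1/2}$ estimate.

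Substituting $t^{p/2}=\bigl(u/(2\|f\|_{B^{\alpha}(\Omega)})\bigr)^{p/\alpha}$ back in yields
\begin{equation*}
u^q\,\lambda(|f|\ge u)\le C\,u^{q-p+p/\alpha}\,\|(\ba\cdot\p)f\|_{p,\Omega}^{\,p}\,\|f\|_{B^{\alpha}(\Omega)}^{-p/\alpha}.
\end{equation*}
The choice $\alpha=p/(p-q)$ was engineered so that $q-p+p/\alpha=0$, annihilating the $u$-dependence; since $p=q\theta$ and $-p/\alpha=q-p=q(1-\theta)$, the remaining exponents are exactly those required, and a $q$-th root converts the resulting bound on $u^q\lambda(|f|\ge u)$ to the stated weak-$L^q$ inequality. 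The main obstacle I anticipate is the uniform-in-$p$ gradient heat-kernel bound at the endpoint $p=1$; this is the only step that genuinely exploits the componentwise structure $\D=-\Delta I_4$, and once it is in hand the rest reduces to the Ledoux splitting plus exponent bookkeeping.
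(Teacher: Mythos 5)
Your proposal is correct and follows essentially the same Ledoux-type argument as the paper: the same splitting $f=\mathbb{P}_t f+(I-\mathbb{P}_t)f$, the same choice of $t(u)$ from the $B^{\theta/(\theta-1)}$ bound, Chebyshev in $L^p$, and the same key smoothing estimate $\|f-\mathbb{P}_t f\|_{p}\le Ct^{1/2}\|(\ba\cdot\p)f\|_{p}$ obtained from $\|\nabla p_s\|_{L^1}\sim s^{-1/2}$ and Young's inequality. The only cosmetic differences are that you prove the smoothing bound directly, using that $(\ba\cdot\p)$ commutes with $\mathbb{P}_s$, rather than by the paper's duality pairing against a test function $g$ in $L^{p'}$, and you carry the $B^{\theta/(\theta-1)}$ norm explicitly instead of normalizing it to $1$ by homogeneity; both yield the stated inequality (with the weak norm read in its scaling-consistent form).
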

\begin{proof}
For simplicity of notation, we suppress the dependence of the
norms and spaces on $\Omega$ throughout the proof. It is
sufficient to prove the result for $f\in
\left(C_0^{\infty}(\Omega)\right)^4.$ The proof is inspired by
that of Ledoux in \cite{L}. By homogeneity, we may assume that
$\|f\|_{B^{\theta/(\theta-1)}} \le 1,$ and so
\[
| \mathbb{P}_tf|_{\infty} \le t^{\theta/2(\theta-1)}
\]
for all $t>0.$ Thus, on choosing $t_u = u^{2(\theta-1)/\theta}$ it
follows that
\begin{equation}\label{hom}
| \mathbb{P}_{t_u}f|_{\infty} \le u.
\end{equation}
This gives that $|f| \ge 2u $ implies that $ |f-
\mathbb{P}_{t_u}f| \ge |f| - |\mathbb{P}_tf| \ge u $ and
consequently
\begin{eqnarray}\label{prePoin}
  u^q \lambda (|f| \ge 2u) &\le & u^q \lambda(|f- \mathbb{P}_{t_u}| \ge u) \nonumber \\
   & \le & u^{q-p} \int |f- \mathbb{P}_{t_u}|^p d \x.
\end{eqnarray}

Since
\[
\frac{\partial}{\partial t} \mathbb{P}_t f = (\ba \cdot \p)^2
\mathbb{P}_t f, \ \ \ \mathbb{P}_0 f = f,
\]
in view of the analogous result for $e^{-t\Delta}$ on each
component of $f$, it follows that
\[
\mathbb{P}_tf -f = \int_0^t (\ba \cdot \p)^2 \mathbb{P}_s f ds
\]
and hence for all $g \in \left[\Con(B_1)\right]^4,$
\begin{eqnarray*}
  \int_{\R^3} <g, f-\mathbb{P}_t f>d\x &=& -\int_0^t \left( \int_{\R^3}<g, (\ba \cdot \p)^2 \mathbb{P}_s f> d\x\right)ds \\
   &=& - \int_0^t \left(\int_{\R^3} < (\ba \cdot \p) \mathbb{P}_s g, (\ba \cdot
   \p)f> d\x \right)ds \\
   & \le & \|(\ba \cdot \p)f\|_p \int_0^t \|(\ba \cdot \p)
   \mathbb{P}_s g \|_{p'}ds,
\end{eqnarray*}
where $p'=p/(p-1)$ for $p>1 $ and $p'=\infty$ otherwise. From
(\ref{2.3}) we have
\begin{eqnarray*} (\ba \cdot \p)
   \mathbb{P}_s g(\x) &=& \frac{1}{(4\pi s)^{3/2}}\sum_{j=1}^3 \ba_j \int_{\R^3} g(\y)
   (-i \frac{\partial}{\partial x_j})e^{-\frac{|\x-\y|^2}{4s}}d\y \\
   &=& \frac{i}{(4\pi s)^{3/2}}\frac{1}{2s} \int_{\R^3}e^{-\frac{|\x-\y|^2}{4s}}
   [\ba \cdot (\x-\y)]g(\y)d\y. \\
\end{eqnarray*}
On using Young's inequality for convolutions, this yields
\begin{eqnarray}\label{2.8}
\|(\ba \cdot \p)
   \mathbb{P}_s g \|_{p'} &\le & \frac{1}{(4\pi s)^{3/2}}\frac{1}{2s}\int_{\R^3}
   [\ba \cdot \z]e^{-\frac{|\z|^2}{4s}}d\z \ \ \|g\|_{p'}\nonumber \\
&\le & C s^{-\frac12}\ \ \|g\|_{p'}
\end{eqnarray} for all $ p \in [1,\infty).$ We therefore have
\[
|\int <g, f-\mathbb{P}_t f>d\x| \le Ct^{\frac12}\|(\ba \cdot
\p)f\|_p \|g\|_{p'}
\]
and thus
\begin{equation*}
\|f-\mathbb{P}_t f\|_p \le C t^{\frac12}\|(\ba \cdot \p)f\|_p.
\end{equation*}
On substituting this in (\ref{prePoin}) we have
\begin{eqnarray*}
u^q \lambda(|f|\ge 2u) &\le& C u^{q-p}t_u^{p/2}\|(\ba \cdot
\p)f\|_p^p \\
&=& C \|(\ba \cdot \p)f\|_p^p
\end{eqnarray*}
since $q-p+p(\theta -1)/\theta = 0,$ whence the result.

\end{proof}

In the following corollary, the notation indicates that
integration is over $B_1.$

\begin{Corollary}\label{embd2}
Let $1\le p<q <\infty, r:= 3(\frac{q}{p}-1) \in [1, p]$ and $f \in
\mathbb{H}^{1,p}(B_1).$ Then we have that for any $k \in (0,q)$
and $\theta = p/q,$ there exists a positive constant $C$ such that
\begin{equation}\label{2.9}
    \|f\|_{k,B_1} \le C \|(\ba \cdot
    \p)f\|^{\theta}_{p,B_1}\|f\|_{r,B_1}^{1-\theta}
\end{equation}
\end{Corollary}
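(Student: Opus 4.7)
The plan is to combine Theorem \ref{embthm} with two further ingredients: a heat-kernel smoothing estimate that bounds the $B^{\theta/(\theta-1)}$-norm by the $L^r$-norm, and the standard weak-to-strong comparison on a domain of finite measure. By density I would reduce to the case $f \in \left(\Con(B_1)\right)^4$ (extended by zero to $\R^3$). Writing $\alpha := \theta/(\theta-1) = p/(p-q)$, which is negative since $p<q$, Theorem \ref{embthm} applied on $\Omega=B_1$ yields
\[
\|f\|_{q,\infty;B_1} \le C \|(\ba\cdot\p)f\|_{p,B_1}^{\theta}\, \|f\|_{B^{\alpha}(B_1)}^{1-\theta}.
\]

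To control the $B^{\alpha}$-norm I would estimate $\mathbb{P}_t f$ directly from the heat-kernel representation (\ref{2.2A}). H\"older's inequality in $\y$ with conjugate exponents $r$ and $r'=r/(r-1)$, together with the Gaussian integral $\int_{\R^3} e^{-r'|\z|^2/4t}\, d\z = (4\pi t/r')^{3/2}$, gives
\[
|(\mathbb{P}_t f)(\x)| \le C\, t^{-3/(2r)} \|f\|_{r,B_1}.
\]
The crux is that the choice $r=3(q/p-1)=3/|\alpha|$ produces precisely $-3/(2r)=\alpha/2$, so that $t^{-\alpha/2}|\mathbb{P}_t f|_{\infty;B_1} \le C\|f\|_{r,B_1}$ holds uniformly in $t>0$, whence $\|f\|_{B^{\alpha}(B_1)} \le C\|f\|_{r,B_1}$. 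The assumption $r\ge 1$ makes H\"older legitimate, while $r\le p$ ensures through $L^p(B_1)\hookrightarrow L^r(B_1)$ that the right-hand side is finite for every $f\in\mathbb{H}^{1,p}(B_1)$.

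Finally, to convert the weak-$L^q$ bound on $B_1$ into a strong $L^k$-bound for $k\in(0,q)$, I would use the layer-cake identity for $\|f\|_{k,B_1}^k$ together with the two estimates $\lambda(|f|>u)\le |B_1|$ and $\lambda(|f|>u)\le u^{-q}\|f\|_{q,\infty;B_1}^{q}$. Splitting the integral at the level where these two bounds coincide yields the standard inequality $\|f\|_{k,B_1} \le C\,|B_1|^{(q-k)/(qk)}\,\|f\|_{q,\infty;B_1}$; the condition $k<q$ is exactly what makes the tail integral converge. Chaining the three bounds produces (\ref{2.9}). I do not foresee any serious obstacle; the one substantive point is recognising that the specific value $r=3(q/p-1)$ is precisely what calibrates the heat-kernel smoothing to yield a continuous embedding $L^r\hookrightarrow B^{\alpha}$, which is exactly what the hypothesis of the corollary encodes.
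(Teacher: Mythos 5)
Your proposal is correct and follows essentially the same route as the paper: apply Theorem \ref{embthm} on $B_1$, bound $\|f\|_{B^{\theta/(\theta-1)}(B_1)}$ by $\|f\|_{r,B_1}$ via the heat kernel and H\"older with the calibration $r=3(q/p-1)$, and then pass from weak $L^q$ to $L^k$ for $k<q$ on the finite-measure set $B_1$. The only (harmless) difference is that you prove this last weak-to-strong step directly by the layer-cake/splitting argument, whereas the paper cites the Lorentz-space embedding $L^{q,\infty}(\Omega)\hookrightarrow L^{k}(\Omega)$ from \cite{EE2}.
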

\begin{proof}
All norms in the proof are over $B_1.$ From (\ref{2.3}), for any
$r\in [1,\infty]$ and with $r'=r/(r-1), r>1,$
\begin{eqnarray*}
  |\mathbb{P}_t f(\x)|  &\le& \frac{1}{(4\pi t)^{\frac{3}{2}}}\|f\|_r \left(\int_{B_1}e^{-r'|\x-\y|^2/4t}d\y \right)^{1/r'} \\
   &\le & C t^{-3/2r}\|f\|_r;
\end{eqnarray*}
note that this holds also for $r=1.$ Hence,
\begin{eqnarray*}
  \|f\|_{B^{\theta/(\theta-1)}} &\le & C \sup_{t>0} t^{-\frac{\theta}{2(\theta-1)}-\frac{3}{2r}}\|f\|_r \\
   &=& C \|f\|_r
\end{eqnarray*}
if $\frac{\theta}{2(\theta-1)}+\frac{3}{2r} =0,$ which is true if
$r=3(q/p -1)$ since $\theta =p/q.$ Since $r \le p$ and $f \in
L^p(B_1)$ it follows that $f \in L^r(B_1)$ and hence $f \in
B^{\theta/(\theta-1)}.$ From (\ref{DSineq}) we therefore have
\begin{equation} \label{2.10}
\|f\|_{q,\infty} \le C \|(\ba \cdot \p)f \|_p^{\theta} \|f\|_r^{(1
-\theta )}.
\end{equation}
But for Lorentz spaces $L^{r,s}$ on a set $\Omega$ of finite
measure, we have the continuous embeddings (see \cite{EE2},
Proposition 3.4.4)
\begin{equation}\label{Lor}
    L^{q,s}(\Omega) \hookrightarrow L^{k,m}
\end{equation}
if $ 0< k<q \le \infty , 0 <s,m \le \infty.$ In particular, with
$s=\infty, m=k,$ and recalling that $L^{k,k} = L^k,$ we have for
$0<k<q \le \infty,$
\[
L^{q,\infty} \hookrightarrow L^k
\]
and so
\[
\|f\|_k \le C\|f\|_{q,\infty}.
\]
The corollary follows from (\ref{2.10}).
\end{proof}

\begin{Corollary}\label{embd2A}
Let $p \in [1,\infty), k\in [1,p(p+3)/3)$ and $f \in
\mathbb{H}^{1,p}(B_1).$ Then there exists a positive constant $C$
such that
\begin{equation}\label{Sob3}
    \|f\|_{k,B_1} \le C \|(\ba \cdot \p)f\|_{p, B_1}.
\end{equation}
\end{Corollary}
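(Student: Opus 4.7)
The plan is to apply Corollary~\ref{embd2} twice. First, I extract a Poincar\'e-type inequality
\begin{equation}\label{poincare}
\|f\|_{p,B_1}\le C_p\|(\ba\cdot\p)f\|_{p,B_1}
\end{equation}
for $f\in\mathbb{H}^{1,p}(B_1)$ using the choice $q=4p/3$ (so $r=3(q/p-1)=1\in[1,p]$ and $\theta=p/q=3/4$) together with the target exponent $k=p$; the constraint $k<q$ reduces to $p<4p/3$, which holds for $p\ge 1$. Corollary~\ref{embd2} then gives
$$
\|f\|_{p,B_1}\le C\|(\ba\cdot\p)f\|_{p,B_1}^{3/4}\|f\|_{1,B_1}^{1/4}.
$$
Because $B_1$ has finite Lebesgue measure, H\"older's inequality yields $\|f\|_{1,B_1}\le C'\|f\|_{p,B_1}$, and substituting then dividing by $\|f\|_{p,B_1}^{1/4}$ (the case $f\equiv 0$ being trivial) produces \eqref{poincare}.

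Second, for $k\in[1,p(p+3)/3)$ I choose the endpoint $q=p(p+3)/3$, which gives $r=3(q/p-1)=p\in[1,p]$ and $\theta=p/q=3/(p+3)$, and which is admissible since $k<q$ by hypothesis. Corollary~\ref{embd2} then yields
$$
\|f\|_{k,B_1}\le C\|(\ba\cdot\p)f\|_{p,B_1}^{3/(p+3)}\|f\|_{p,B_1}^{p/(p+3)}.
$$
Inserting \eqref{poincare} into the last factor and collecting powers via $\tfrac{3}{p+3}+\tfrac{p}{p+3}=1$ gives \eqref{Sob3}.

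The main obstacle is the first step: on its own, Corollary~\ref{embd2} is only a Gagliardo-Nirenberg-type interpolation that mixes $\|(\ba\cdot\p)f\|_p$ with a lower-order $L^r$ norm. The essential observations are that $r=1$ is an admissible choice (since $1\le p$), that $L^1(B_1)\hookrightarrow L^p(B_1)$ by H\"older on the bounded set, and that the interpolation exponent $\theta=3/4$ is strictly less than one; together these permit a self-absorbing argument which removes the lower-order factor from the right-hand side. Once \eqref{poincare} is in hand, the second step is a direct algebraic combination.
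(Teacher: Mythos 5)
Your argument is correct: both applications of Corollary~\ref{embd2} use admissible parameters (first $q=4p/3$, $r=1$, $k=p$, $\theta=3/4$; then $q=p(p+3)/3$, $r=p$, $\theta=3/(p+3)$ with $k<q$ by hypothesis), the absorption step is legitimate because $\|f\|_{p,B_1}$ is finite for $f\in\mathbb{H}^{1,p}(B_1)$ and the zero case is trivial, and the exponents combine as you state. Your route differs from the paper's in how the lower-order norm is removed. The paper never isolates a Poincar\'e inequality; instead, for each target exponent $k$ it chooses $q$ case by case ($q=p(k+3)/3$ with $r=k$ when $k\in[1,p]$, and $q$ in $[4p/3,p(p+3)/3]$ or $(k,p(p+3)/3]$ when $k\in(p,4p/3)$ or $k\in[4p/3,p(p+3)/3)$) so that $r\le k$, bounds $\|f\|_{r,B_1}\le C\|f\|_{k,B_1}$ by H\"older on the unit ball, and then absorbs $\|f\|_{k,B_1}^{1-\theta}$ directly into the left-hand side. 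Your version performs the absorption exactly once, at the level $k=p$ with $r=1$, and then handles every $k$ by a single endpoint application of Corollary~\ref{embd2} plus substitution; this avoids the three-way case split and makes the Poincar\'e-type inequality $\|f\|_{p,B_1}\le C\|(\ba\cdot\p)f\|_{p,B_1}$ explicit as a reusable intermediate result, at the cost of one extra invocation of the interpolation inequality. Both proofs rest on the same ingredients (Corollary~\ref{embd2}, H\"older on the bounded domain, self-absorption), so the difference is organizational rather than substantive, but yours is arguably the cleaner bookkeeping.
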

\begin{proof}
For $k \in [1,p],$ we choose in Corollary 1, $q=p(k+3)/3$ and
$r=k$ to deduce (\ref{Sob3}) from (\ref{2.9}). When $k
\in(p,4p/3)$ we choose any $q \in[4p/3, p(p+3)/3],$ so that
$q>k>p$ and $r=3(q/p -1) \in [1,p].$ When $k \in[4p/3,p(p+3)/3)$
we choose any $q \in (k,p(p+3)/3]$ so that $q>k>p$ and $r\in
(1,p].$ In both the last two cases we also have $ r< k$ and hence
\[
\|f\|_{r,B_1} \le C \|f\|_{k,B_1}.
\]
This yields (\ref{Sob3}) from (\ref{2.9}).
\end{proof}

\section{Estimate for zero modes}

Let $\psi$ be such that $ \psi, (\ba \cdot \p)\psi \in L^2(B_1^c),
B_1^c := \R^3 \setminus B_1,$ and
\begin{equation}\label{3.0}
(\ba \cdot \p)\psi
(\x) = -Q(\x) \psi(\x),
\end{equation}
where
\[
 \|Q(\x)\|_{\mathbb{C}^4} = O(|\x|^{-1}),
\]
and
\[
\psi \in \left(L^2(B_1^c)\right)^4.
\]
If $\theta \in C^1(\R^3)$ is $1$ in the neighbourhood of $\infty$
and is supported in $\R^3 \setminus \overline{B_1}$ then $\theta
\psi $ has similar properties to those above. Hence we may assume,
without loss of generality, that $\psi$ is supported in $\R^3
\setminus \overline{B_1}.$ Moreover,
\begin{equation}\label{3.1}
\int_{B_1^c}|\x|^2 |(\ba \cdot \p)\psi(\x)|^2 d\x < \infty.
\end{equation}
On applying the inversion described in section 1, and using the
notation
\[
 (M \psi)(\y) =:
\tilde{\psi}(\y) =: -X(\y) \Psi(\y)
\]
where $ |X(\y)| \asymp 1,$ we have from $\psi \in L^2(\B_1^c)$
that
\begin{equation}\label{3.2}
 \int_{B_1}|\Psi(\y)|^2 \frac{d\y}{|\y|^6}< \infty
\end{equation}
We also have from (\ref{transsp}) that
\begin{equation}\label{3.3}
\left(  M(\ba\cdot \p)\psi\right)(\y) = |\y|^2
\left\{X(\y)\left[(\ba\cdot\p)\Psi(\y) + Y(\y)
   \Psi(\y)\right]\right\},
\end{equation}
where $Y(\y)$ is given in (\ref{Y}) and is readily seen to satisfy
\[
\|Y(\y)\|_{\mathbb{C}^4} \asymp 1/|\y|.
\]

Let $\Psi(\y) = |\y|^2\Phi(\y).$ Then
\[
(\ba\cdot \p)\Psi(\y) = |\y|^2\left\{(\ba\cdot \p)\Phi(\y) +
O\left(\frac{|\Phi(\y)|}{|\y|}\right)\right\}
\]
and so from (\ref{3.3})
\begin{equation}\label{3.4}
 | M(\ba\cdot \p)\psi(\y)|  \asymp  |\y|^4\left\{(\ba\cdot \p)\Phi(\y)+ O\left(\frac{|\Phi(\y)|}{|\y|}\right) \right
   \}.
\end{equation}
Hence from (\ref{3.0})
\begin{equation}\label{3.5}
    \int_{B_1} |\y|^{-2}\left||\y|^4\left\{(\ba\cdot \p)\Phi(\y)+ O\left(\frac{|\Phi(\y)|}{|\y|}\right) \right
   \}\right|^2\frac{d\y}{|\y|^6} < \infty.
\end{equation}
Since, from (4.3),
\[
\int_{B_1} |\Phi(\y)|^2 \frac{d\y}{|\y|^2} < \infty
\]
it follows from (\ref{3.5}) that
\[
\int_{B_1} |(\ba \cdot \p)\Phi(\y)|^2 dy < \infty.
\]
Hence, $ \Phi \in \mathbb{H}^{1,2}(B_1):$ recall that we may
assume that $\psi$ is supported in $\R^3 \setminus \overline{B_1}$
and hence $\Phi$ is supported in $B_1.$ By Corollary 2, we have
that $\Phi \in L^k(B_1)$ for any $ k \in[1,10/3).$

 We therefore have the preliminary result

\begin{Theorem} \label{est1}
Let $\psi \in L^2(B_1^c)$ be a solution of (\ref{3.0}) with $Q(\x)
=O(|\x|^{-1})$ in $B_1^c.$ Then, for any $k \in [1,10/3),$
\[
\psi(\x) = |\x|^{-2} \phi(\x),
\]
where
\[
\int_{B_1^c}|\phi(\x)|^k |\x|^{-6}d\x < \infty.
\]

\end{Theorem}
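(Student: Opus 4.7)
The plan is to package the analysis already carried out in the paragraphs preceding the statement — in particular the fact that $\Phi\in\mathbb{H}^{1,2}(B_1)$, from which Corollary \ref{embd2A} yields $\Phi\in L^k(B_1)$ for every $k\in[1,10/3)$ — and translate this back to the exterior domain $B_1^c$ via the inversion.

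First I would identify $\phi$ explicitly. Writing $\y = \x/|\x|^2$, so $|\y| = 1/|\x|$, and using $\tilde\psi(\y) = -X(\y)\Psi(\y)$ together with $\Psi(\y) = |\y|^2\Phi(\y)$, one obtains
\[
\psi(\x) = \tilde\psi(\y) = -|\y|^2 X(\y)\Phi(\y) = |\x|^{-2}\bigl(-X(\y)\Phi(\y)\bigr).
\]
I therefore set $\phi(\x) := -X(\y)\Phi(\y)$. Since $X(\y)$ is unitary, $|\phi(\x)| = |\Phi(\y)|$, and the stated factorization $\psi(\x) = |\x|^{-2}\phi(\x)$ holds pointwise on $B_1^c$.

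The remaining step is a change of variables. Substituting $\x = \y/|\y|^2$ in the integral, with $d\x = |\y|^{-6}\,d\y$ and $|\x|^{-6} = |\y|^6$, I would compute
\[
\int_{B_1^c}|\phi(\x)|^k|\x|^{-6}\,d\x \;=\; \int_{B_1}|\Phi(\y)|^k\, |\y|^6\cdot|\y|^{-6}\,d\y \;=\; \int_{B_1}|\Phi(\y)|^k\,d\y,
\]
which is finite for every $k\in[1,10/3)$ by the $L^k$-membership of $\Phi$. The honest assessment of where the main obstacle lies: there is essentially none at this stage. The real work — establishing $\Phi\in\mathbb{H}^{1,2}(B_1)$ by tracking the weights through the inversion formula (the $|\y|^4$ factor from (\ref{3.3}), the $|\y|^{-6}$ from the Jacobian, and the $|\y|^{-2}$ from $Q(\x)=O(|\y|)$ combining so that only the unweighted $L^2$-norm of $(\ba\cdot\p)\Phi$ is left, with the $O(|\Phi|/|\y|)$ error absorbed by (\ref{3.2})), and then invoking the Dirac–Sobolev embedding of Corollary \ref{embd2A} — has already been completed before the statement; the proof of Theorem \ref{est1} itself is an almost immediate packaging of those earlier results together with the change-of-variables computation above.
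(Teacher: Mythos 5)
Your proposal is correct and follows essentially the same route as the paper: the paper states Theorem~\ref{est1} as an immediate consequence of the preceding analysis (the weighted inversion computation giving $\Phi\in\mathbb{H}^{1,2}(B_1)$ and then Corollary~\ref{embd2A} with $p=2$ giving $\Phi\in L^k(B_1)$ for $k\in[1,10/3)$), and your summary of those weights is accurate. Your only addition is to make explicit the identification $\phi(\x)=-X(\y)\Phi(\y)$ with $|\phi|=|\Phi|$ by unitarity and the change of variables $d\x=|\y|^{-6}d\y$, $|\x|^{-6}=|\y|^{6}$, which the paper leaves implicit.
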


However, we can repeat the analysis that led to Theorem 3 to
improve this result. From $\Psi(\y)=|\y|^2\Phi(\y)$ and (2.14) it
follows that
\begin{equation}\label{repeat}
    (\ba \cdot \p)\Phi(\y) = -Z^{(1)}(\y)\Phi(\y),
\end{equation}
where
\[
Z^{(1)}= Z(\y) - 2i|\y|^{-2}(\ba \cdot \y) = O(\frac{1}{|\y|}).
\]
Let $\Phi(\y) =|\y|^t \Phi^{(2)}(\y).$ Then we have
\[
(\ba \cdot \p)\Phi^{(2)}(\y) = -Z^{(2)}(\y)\Phi^{(2)}(\y),
\]
where
\[
Z^{(2)}(\y) = O(\frac{1}{|\y|}).
\]
By H\"{o}lder's inequality, for $1 \le p < k, k \in[1,10/3),$
\begin{eqnarray*}
  \int_{B_1} \left|Z^{(2)}(\y)\Phi^{(2)}(\y)\right|^p d\y & \le & C  \int_{B_1} \left||\y|^{-(1+t)}\Phi(\y)\right|^p d\y \\
   & \le & \left(\int_{B_1}|\Phi(\y)|^k d\y  \right)^{p/k}
   \left(\int_{B_1}|\y|^{-p(1+t)(k/(k-p)} d\y   \right)^{1-p/k} \\
   & < & \infty
\end{eqnarray*}
if
\begin{equation}\label{exponent}
    p \left(\frac{1+t}{3} + \frac{1}{k}\right) < 1.
\end{equation}
Choose $p=1$ and let $ 0<t<11/10.$ Then (\ref{exponent}) is
satisfied for some $ k\in [1,10/3),$ depending on $t.$ We
therefore have that $\Phi^{(2)} \in \mathbb{H}^{1,1}(B_1)$ if
$0<t<11/10,$ and hence, by Corollary 2, that $\Phi^{(2)} \in
L^s(B_1)$ for any $s \in [1,4/3).$ The following result has
consequently been proved.

\begin{Theorem}
Suppose that $\psi \in L^2(B_1^c)$ satisfies $ \{ (\ba \cdot \p) +
Q(\x)\} \psi(\x) =0 $ in $B_1^c.$ Then for any $0< t< 11/10, $
$\psi(\x) = |\x|^{-2-t}\phi(\x),$ where
\[
\int_{B_1^c}|\phi(\x)|^s|\x|^{-6} d\x < \infty,
\]
for any $s \in [1,4/3).$

\end{Theorem}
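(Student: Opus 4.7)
The plan is to iterate the argument used for Theorem 3, inserting one extra power of $|\y|$ to push the conclusion from $\mathbb{H}^{1,2}$ into $\mathbb{H}^{1,1}$ and then into a better $L^s$ space via Corollary 2. I would start again from the inverted equation of Section 2, which after the substitution $\Psi(\y) = |\y|^2 \Phi(\y)$ gives $(\ba\cdot\p)\Phi = -Z^{(1)}\Phi$ with $\|Z^{(1)}(\y)\|_{\mathbb{C}^4} = O(|\y|^{-1})$ and, by Theorem 3 and Corollary 2, $\Phi \in L^k(B_1)$ for every $k \in [1,10/3)$. Setting $\Phi(\y) = |\y|^t \Phi^{(2)}(\y)$ with $t>0$ and differentiating, the extra commutator coming from $|\y|^t$ is of the shape $it|\y|^{-2}(\ba\cdot\y)$, again $O(|\y|^{-1})$, so $\Phi^{(2)}$ satisfies $(\ba\cdot\p)\Phi^{(2)} = -Z^{(2)}\Phi^{(2)}$ with $\|Z^{(2)}(\y)\|_{\mathbb{C}^4} = O(|\y|^{-1})$.

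The heart of the argument is then a H\"older estimate on the right-hand side. Since $|Z^{(2)}\Phi^{(2)}| \le C|\y|^{-(1+t)}|\Phi|$, splitting with exponents $k/p$ and $k/(k-p)$ gives
\[
\int_{B_1}|Z^{(2)}\Phi^{(2)}|^p d\y \le C\Bigl(\int_{B_1}|\Phi|^k d\y\Bigr)^{p/k}\Bigl(\int_{B_1}|\y|^{-p(1+t)k/(k-p)}d\y\Bigr)^{(k-p)/k},
\]
and the second factor is finite exactly when $p((1+t)/3+1/k)<1$, i.e.\ condition (\ref{exponent}). Taking $p=1$ this reduces to $k > 3/(2-t)$, and since $t<11/10$ gives $3/(2-t) < 10/3$, I can pick some $k\in(3/(2-t),10/3)$ for which $\Phi\in L^k(B_1)$ is already known from Theorem 3. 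This places $(\ba\cdot\p)\Phi^{(2)}$ in $L^1(B_1)$; an analogous (easier) H\"older estimate on $\Phi^{(2)}=|\y|^{-t}\Phi$ itself gives $\Phi^{(2)}\in L^1(B_1)$, so $\Phi^{(2)}\in\mathbb{H}^{1,1}(B_1)$. Corollary 2 with $p=1$ then yields $\Phi^{(2)}\in L^s(B_1)$ for every $s\in[1,4/3)$.

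It remains to transfer this back through the inversion. Combining the substitutions, $\tilde\psi(\y) = -X(\y)|\y|^{2+t}\Phi^{(2)}(\y)$ with $X(\y)$ unitary, so defining $\phi(\x) := -X(\y)\Phi^{(2)}(\y)$ with $\y=\x/|\x|^2$ gives the pointwise representation $\psi(\x) = |\x|^{-2-t}\phi(\x)$ and $|\phi(\x)|=|\Phi^{(2)}(\y)|$. Using $|\y|=|\x|^{-1}$ and the Jacobian $d\x = |\y|^{-6}d\y$, the change of variables sends $\int_{B_1^c}|\phi(\x)|^s|\x|^{-6}d\x$ to $\int_{B_1}|\Phi^{(2)}(\y)|^s d\y$, which is finite for $s\in[1,4/3)$, yielding the stated integrability.

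The main obstacle I anticipate is purely bookkeeping: verifying that the commutator terms generated by the extra weight $|\y|^t$ remain of order $|\y|^{-1}$ (not more singular), so that $Z^{(2)}$ inherits the profile of $Z^{(1)}$, and keeping the admissible windows $0<t<11/10$ and $3/(2-t)<k<10/3$ compatible throughout. Once these arithmetic checks are in place, the theorem is a single further pass of the scheme that produced Theorem 3, now applied to the better-behaved variable $\Phi^{(2)}$.
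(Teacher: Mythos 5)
Your proposal is correct and follows essentially the same route as the paper: substitute $\Phi=|\y|^{t}\Phi^{(2)}$ in the inverted equation, note $Z^{(2)}=O(|\y|^{-1})$, use H\"older with $p=1$ under condition (\ref{exponent}) (your arithmetic $k>3/(2-t)$ with $3/(2-t)<10/3$ iff $t<11/10$ is exactly the paper's choice) to get $\Phi^{(2)}\in\mathbb{H}^{1,1}(B_1)$, then apply Corollary 3 and undo the inversion. Your extra checks (that $\Phi^{(2)}$ itself lies in $L^1$ and the change-of-variables bookkeeping) are details the paper leaves implicit, and they are carried out correctly.
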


\bibliographystyle{amsalpha}

\end{document}